\begin{document}

\title*{Coefficient-Robust A Posteriori Error Estimation for H(curl)-elliptic Problems}
\author{Yuwen Li}
\institute{Yuwen Li \at The Pennsylvania State University, University Park, \email{yuwenli925@gmail.com}
}
%
%
\maketitle

\abstract{We extend the framework of a posteriori error estimation by preconditioning in [Li, Y., Zikatanov, L.: Computers \& Mathematics with Applications. \textbf{91}, 192-201 (2021)] and derive new a posteriori error estimates for H(curl)-elliptic two-phase interface problems. The proposed error estimator provides two-sided bounds for the discretization error and is robust with respect to coefficient variation under mild assumptions. For H(curl) problems with constant coefficients, the performance of this estimator is numerically compared with the one analyzed in [Sch\"oberl, J.: Math.~Comp. \textbf{77}(262), 633-649 (2008)]. }

\section{Introduction}
\label{sec:1}
Adaptive mesh refinement (AMR) is a popular tool in numerical simulations as it is able to resolve singularity from nonsmooth data and irregular space domains. A building block of AMR is a posteriori error estimation, see, e.g., \cite{Verfurth2013} for a classical introduction. On the other hand, preconditioners are discrete operators used  to accelerate Krylov subspace methods for solving sparse linear systems (cf.~\cite{Xu1992}). Recently, \cite{LiZikatanov2020CAMWA,LiZikatanov2020arXiv} introduced a novel framework linking posteriori error estimation and \emph{preconditioning} in the Hilbert space. Such an approach yields many old and new error estimators for boundary value problems posed on de Rham complexes. 

In particular, for the positive-definite H(curl) problem,  \cite{LiZikatanov2020arXiv} presents a new residual estimator robust w.r.t.~high-contrast constant coefficients. 
In this paper, we extend the idea in \cite{LiZikatanov2020arXiv} to the H(curl) interface problem and derive new a posteriori error estimates robust w.r.t.~\emph{both} extreme coefficient magnitude as well as large  coefficient jump. The analysis avoids regularity assumptions used in existing works. We  numerically compare the performance of the estimator in \cite{LiZikatanov2020arXiv} with the one analyzed in \cite{Schoberl2008}.

\subsection{H(curl)-Elliptic Problems}\label{secpre}
Let $\Omega\subset\mathbb{R}^d$ with $d\in\{2,3\}$ be a bounded Lipschitz domain, and $n$ be a unit vector normal to $\partial\Omega$. Let $\nabla\times$ be the usual curl in $\mathbb{R}^3$, $\nabla\times=(\partial_{x_2},-\partial_{x_1})\cdot$ in  $\mathbb{R}^2$. We define
\begin{equation*}
    V=\left\{v\in [L^2(\Omega)]^d: \nabla\times v\in [L^2(\Omega)]^{\frac{d(d-1)}{2}},~v\wedge n=0\text{ on }\partial\Omega\right\},
\end{equation*}
where $v\wedge n=v\times n$ in $\mathbb{R}^3$, $v\wedge n=v\cdot n^\perp$ in $\mathbb{R}^2$ with $n^\perp$ the counter-clockwise rotation of $n$ by $\frac{\pi}{2}$, and $[X]^d$ the Cartesian product of $d$ copies of $X$. Let $(\cdot,\cdot)_{\Omega_0}$ denote the $L^2(\Omega_0)$ inner product and $(\cdot,\cdot)=(\cdot,\cdot)_{\Omega}.$
Given $f\in L^2(\Omega)$ and positive $\varepsilon, \kappa\in L^\infty(\Omega)$, the H(curl)-elliptic boundary value problem seeks $u\in V$ s.t.
\begin{equation}\label{Maxwell}
(\varepsilon\nabla\times u,\nabla\times v)\text{ + }(\kappa u,v)=(f,v),\quad\forall v\in V.
\end{equation}
The space $V$ is equipped with the $V$-norm and energy inner product based on
\begin{equation*}
    (v,w)_V=(\varepsilon \nabla\times v,\nabla\times w)\text{ + }(\kappa v,w),\quad\forall v, w\in V.
\end{equation*}

Let $\mathcal{T}_h$ be a conforming tetrahedral or hexahedral partition of $\Omega.$ Problem \eqref{Maxwell} is often discretized using the N\'ed\'elec edge element space $V_h\subset V$.
The discrete problem is to find $u_h\in V_h$ s.t.
\begin{equation}\label{disMaxwell}
(\varepsilon\nabla\times u_h,\nabla\times v)\text{ + }(\kappa u_h,v)=(f,v),\quad\forall v\in V_h.
\end{equation}

The semi-discrete Maxwell equation is an important example of  \eqref{Maxwell}. In this case, $\varepsilon$ is the reciprocal of the magnetic permeability and $\kappa$ is proportional to $1/\tau^2$, where $\tau$ is the time stepsize. Therefore, we are interested in $\varepsilon$ with large jump and potentially huge $\kappa$. In particular, we assume $\kappa>0$ is a constant, {$\Omega_1\subset\Omega$, $\Omega_2\subset\Omega$ are non-overlapping and simply-connected polyhedrons aligned with $\mathcal{T}_h$,}  $\bar{\Omega}=\bar{\Omega}_1\cup\bar{\Omega}_2$, and
\begin{equation}
    \varepsilon|_{\Omega_1}=\varepsilon_1,\quad\varepsilon|_{\Omega_2}=\varepsilon_2,
\end{equation}
where $\varepsilon_1\geq\varepsilon_2>0$ are constants. The interface is $\Gamma:=\bar{\Omega}_1\cap\bar{\Omega}_2.$ A posteriori error analysis for more general $\varepsilon, \kappa$ is beyond the scope of this work but is possible by making monotonicity-type assumptions on distributions of $\varepsilon$ and $\kappa,$ cf.~\cite{BernardiVerfurth2000,CaiCao2015}.

Throughout the rest of this paper, we say $\alpha\preccurlyeq \beta$ provided $\alpha\leq C\beta$, where $C$ is an absolute constant depending solely on $\Omega$, the aspect ratio of elements in $\mathcal{T}_h,$ and the polynomial degree used in $V_h.$ We say $\alpha\simeq \beta$ if $\alpha\preccurlyeq \beta$ and $\beta\preccurlyeq \alpha.$ Given a Lipschitz manifold $\Sigma\subset\Omega$, by $\|\cdot\|_\Sigma$ we denote the $L^2(\Sigma)$ norm.

\section{Nodal Auxiliary Space Preconditioning}\label{secmain}
The key idea in \cite{LiZikatanov2020arXiv} is \emph{nodal auxiliary space preconditioning}, originally proposed in  \cite{HiptmairXu2007} for solving discrete H(curl) and H(div) problems. The auxiliary $H^1$ space here is 
\begin{equation*}
    W=\left\{w\in L^2(\Omega): \nabla w\in [L^2(\Omega)]^d,~w|_{\partial\Omega}=0\right\},
\end{equation*}
endowed with the inner product 
\begin{equation*}
    (w_1,w_2)_W=(\varepsilon \nabla w_1,\nabla w_2)\text{ + }(\kappa w_1,w_2)
\end{equation*}
and the induced $W$-norm. The next regular decomposition (with \emph{mixed} boundary condition, cf.~\cite{LiZikatanov2020arXiv,HiptmairXu2007}) is widely used in the analysis of H(curl) problems.
\begin{theorem}\label{regular0}
Given $v\in V|_{\Omega_1}$, there exist $\varphi\in W|_{\Omega_1}$,  $z\in[W|_{\Omega_1}]^d,$ s.t.~$v=\nabla\varphi\text{ + }z$, 
\begin{align*}
    &\|z\|_{H^1(\Omega_1)}\leq C_0\|\nabla\times v\|,\\
    &\|\varphi\|_{H^1(\Omega_1)}\leq C_0 (\|v\|\text{ + }\|\nabla\times v\|),
\end{align*}
where $C_0$ is a constant depending only on $\Omega_1$.
\end{theorem}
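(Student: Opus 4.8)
The plan is to construct $z$ first, as a \emph{regular potential} of $\nabla\times v$, and then to obtain $\varphi$ from the fact that $v-z$ is curl free. Write $\Gamma_D:=\partial\Omega_1\cap\partial\Omega$ for the face(s) of $\Omega_1$ carrying the homogeneous tangential trace of $v$, and recall that $W|_{\Omega_1}$ is the space of $H^1(\Omega_1)$ functions whose trace vanishes on $\Gamma_D$. The presence of the interface $\Gamma$ forces a \emph{mixed} boundary condition: the decomposition must vanish on $\Gamma_D$ but not on $\Gamma$, so the enlargement of $\Omega_1$ used below may only be carried out across $\Gamma$.

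First I would record the compatibility conditions $\nabla\cdot(\nabla\times v)=0$ in $\Omega_1$ and $(\nabla\times v)\cdot n=0$ on $\Gamma_D$, the latter being the tangential surface divergence of the vanishing trace $v\wedge n|_{\Gamma_D}$. I would then fix a bounded Lipschitz domain $G$ with $\Omega_1\subseteq G$, $\Gamma_D\subseteq\partial G$, and $\Gamma$ in the interior of $G$ (available because $\Omega_1$ is a polyhedron whose boundary faces split into $\Gamma_D$ and $\Gamma$), and extend $w:=\nabla\times v$ to $w^\ast$ on $G$ with $\nabla\cdot w^\ast=0$, $w^\ast\cdot n=0$ on $\Gamma_D$, and $\|w^\ast\|_{L^2(G)}\le C\|w\|_{L^2(\Omega_1)}$; this is a bounded extension followed by a Bogovskii-type removal of its divergence supported away from $\Gamma_D$, legitimate precisely because $w\cdot n=0$ on $\Gamma_D$. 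Next I would invoke a bounded right inverse of $\nabla\times$ mapping $w^\ast$ to $z^\ast\in[H^1(G)]^d$ with $\nabla\times z^\ast=w^\ast$ in $G$, $z^\ast=0$ on $\Gamma_D$, and $\|z^\ast\|_{H^1(G)}\le C\|w^\ast\|_{L^2(G)}$. Such an operator is obtained by extending $w^\ast$ by zero across $\Gamma_D$ into a ball $B\supseteq\bar G$ — divergence free there thanks again to $w^\ast\cdot n=0$ on $\Gamma_D$ — applying a classical regularized Poincar\'e/Bogovskii potential on the topologically trivial $B$, and finally subtracting an $H^2$ gradient on $B\setminus\bar G$, where that potential is curl free, so as to make it vanish there and hence on $\Gamma_D$. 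Restricting, $z:=z^\ast|_{\Omega_1}$ satisfies $\nabla\times z=\nabla\times v$ in $\Omega_1$, $z\in[W|_{\Omega_1}]^d$, and $\|z\|_{H^1(\Omega_1)}\le C_0\|\nabla\times v\|$.

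For the scalar part, $v-z$ is curl free in $\Omega_1$ and $(v-z)\wedge n=0$ on $\Gamma_D$; since $\Omega_1$ is simply connected and $\Gamma_D$ connected, $v-z=\nabla\varphi$ for a $\varphi\in H^1(\Omega_1)$, which I would normalize to vanish on $\Gamma_D$ (or to have zero mean over $\Omega_1$ when $\Gamma_D$ has zero surface measure), so that $\varphi\in W|_{\Omega_1}$. A Poincar\'e--Friedrichs inequality then gives $\|\varphi\|_{H^1(\Omega_1)}\le C_0\|\nabla\varphi\|=C_0\|v-z\|\le C_0(\|v\|+\|z\|)\le C_0(\|v\|+\|\nabla\times v\|)$. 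All constants here depend only on $\Omega_1$ — through $G$, $B$, the extension and right-inverse operators, and the Poincar\'e constant — hence in particular not on $\varepsilon$ or $\kappa$.

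The main obstacle is the mixed boundary condition: building a right inverse of $\nabla\times$ whose image vanishes on $\Gamma_D$ while $\Gamma$ lies in the interior of the domain on which the potential is formed, together with the compatible geometric construction of $G$ near the edge $\Gamma\cap\Gamma_D$; the identity $(\nabla\times v)\cdot n=0$ on $\Gamma_D$ is exactly what makes the zero-extension across $\Gamma_D$ — and hence the whole reduction — work. Everything else is routine: bounded $L^2$ and $H(\mathrm{div})$ extensions, the Bogovskii divergence correction, the regular potential on a ball, and Poincar\'e--Friedrichs (cf.~\cite{HiptmairXu2007,LiZikatanov2020arXiv}). The two-dimensional case is analogous and slightly simpler, $\nabla\times v$ being then a scalar field with no divergence constraint.
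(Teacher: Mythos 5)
The paper never proves Theorem~\ref{regular0}: it is quoted as a known regular decomposition with mixed boundary conditions, with a pointer to \cite{LiZikatanov2020arXiv,HiptmairXu2007}. So the relevant comparison is with the standard argument behind those references, and your overall route is exactly that standard one: build a regular vector potential $z$ of $\nabla\times v$ vanishing on $\Gamma_D:=\partial\Omega_1\cap\partial\Omega$ via extension plus a regularized Poincar\'e/Bogovskii operator, then write the curl-free remainder $v-z$ as $\nabla\varphi$ and normalize. The compatibility observation $(\nabla\times v)\cdot n=0$ on $\Gamma_D$, the scalar-potential step, and the norm bookkeeping (Poincar\'e--Friedrichs, $\|\nabla\varphi\|\leq\|v\|+\|z\|$) are all correct.

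There are, however, concrete gaps in the construction of $z$ as written. First, you extend $w^\ast$ by zero from $G$ into a ball $B\supseteq\bar G$ and claim divergence-freeness ``thanks to $w^\ast\cdot n=0$ on $\Gamma_D$''; but the zero extension produces a distributional divergence supported on all of $\partial G$ lying in the interior of $B$, which is essentially all of $\partial G$, while your extension step only secures the vanishing of the normal trace on $\Gamma_D$. As stated, the extended field is not divergence-free and the regularized Poincar\'e operator on $B$ does not return a potential for it. The repair is standard but must be made explicit: construct $w^\ast$ so that $w^\ast\cdot n=0$ on \emph{all} of $\partial G$, e.g.\ by taking $w^\ast=\nabla\psi$ on $G\setminus\bar\Omega_1$ with $\psi$ solving a Neumann problem with data $w\cdot n$ on $\Gamma$ and $0$ on the rest of $\partial(G\setminus\bar\Omega_1)$; the compatibility $\int_\Gamma w\cdot n=0$ holds precisely because $\nabla\cdot w=0$ in $\Omega_1$ and $w\cdot n=0$ on $\Gamma_D$. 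Only then is the zero extension to $B$ divergence-free. Second, the correction ``subtract an $H^2$ gradient on $B\setminus\bar G$'' needs $B\setminus\bar G$ to be a Lipschitz set on which every curl-free $H^1$ field is a gradient (trivial first cohomology, treated per connected component) together with an $H^2$ extension of that scalar to $B$; neither is automatic for an arbitrary enlargement $G$ and must be arranged when choosing $G$ and $B$. Third, your normalization of $\varphi$ uses that $\Gamma_D$ is connected; the paper's standing assumptions (simply connected $\Omega_1,\Omega_2$ with $\bar\Omega=\bar\Omega_1\cup\bar\Omega_2$) do not literally guarantee this, so it should be stated as a hypothesis or handled componentwise (where the sharp bound $\|z\|_{H^1(\Omega_1)}\preccurlyeq\|\nabla\times v\|$ can degrade). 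With these repairs your sketch becomes the standard proof of the quoted decomposition.
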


To derive a posteriori error bounds for \eqref{disMaxwell} uniform w.r.t.~constant $\varepsilon\ll\kappa$, the work \cite{LiZikatanov2020arXiv} utilizes the following modified regular decomposition.
\begin{theorem}\label{regular1}
Given $v\in V|_{\Omega_1}$, there exist $\varphi\in W|_{\Omega_1}$, $z\in[W|_{\Omega_1}]^d,$ s.t.~$v=\nabla\varphi\text{ + }z$ and
\begin{align*}
    &\|\varphi\|_{H^1(\Omega_1)}\text{ + }\|z\|\leq C_1\|v\|,\\
    &|z|_{H^1(\Omega_1)}\leq C_1 (\|v\|\text{ + }\|\nabla\times v\|),
\end{align*}
where $C_1$ is a constant depending only on $\Omega_1.$
\end{theorem}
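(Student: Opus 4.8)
The plan is to obtain the modified decomposition from the classical one in Theorem~\ref{regular0} by absorbing the ``bad'' part of $z$ — the part responsible for the undesirable dependence on $\|\nabla\times v\|$ in the $L^2$-bound — into the gradient term. Concretely, start with the decomposition $v=\nabla\psi+z_0$ of Theorem~\ref{regular0}, so that $\|z_0\|_{H^1(\Omega_1)}\preccurlyeq\|\nabla\times v\|$ and $\|\psi\|_{H^1(\Omega_1)}\preccurlyeq\|v\|+\|\nabla\times v\|$. The difficulty is that this gives $\|z_0\|$ controlled by $\|\nabla\times v\|$, not by $\|v\|$ alone, whereas we want $\|z\|\preccurlyeq\|v\|$. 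The idea is to split $z_0$ further into a curl-free part plus a small remainder: since $\nabla\times z_0=\nabla\times v$, write $z_0=\nabla\theta+\tilde z$ where $\tilde z$ is, say, the $H^1$-regular vector potential of $\nabla\times v$ on a slightly enlarged domain (or one obtained by a Helmholtz-type decomposition on $\Omega_1$), so that $|\tilde z|_{H^1(\Omega_1)}\preccurlyeq\|\nabla\times v\|$ and also $\|\tilde z\|\preccurlyeq\|\nabla\times v\|$. That alone does not help; the real gain comes from noticing $v-\tilde z=\nabla(\psi+\theta)$ is a gradient, and we may then set $z$ to be a suitable truncation/rescaling so that its $L^2$ size is governed by $\|v\|$.

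The cleaner route, which I would actually carry out, is the following two-step argument. First apply Theorem~\ref{regular0} to get $v=\nabla\psi+z_0$ with the stated bounds. Next, observe that the field $z_0$ itself lies in $V|_{\Omega_1}$ with $\nabla\times z_0=\nabla\times v$; apply a Poincaré/Friedrichs-type inequality only where it is legitimate. The key trick from \cite{LiZikatanov2020arXiv}: instead of decomposing $v$, decompose a function closely related to $v$ whose $L^2$ norm we already control. Since $\|z_0\|\preccurlyeq\|\nabla\times v\|$ need not be bounded by $\|v\|$, introduce the gradient correction $\chi\in W|_{\Omega_1}$ solving the Neumann-type problem $(\nabla\chi,\nabla w)_{\Omega_1}=(z_0,\nabla w)_{\Omega_1}$ for all $w\in W|_{\Omega_1}$ (solvability on the simply-connected Lipschitz domain $\Omega_1$ is standard), and set
\begin{align*}
\varphi&=\psi+\chi, & z&=z_0-\nabla\chi.
\end{align*}
Then $v=\nabla\varphi+z$ still holds, $\nabla\times z=\nabla\times v$, and by construction $z$ is $L^2$-orthogonal to all gradients in $W|_{\Omega_1}$; hence $z$ lies in the (closed) orthogonal complement of $\nabla W|_{\Omega_1}$ inside $[L^2(\Omega_1)]^d$, on which the Friedrichs inequality $\|z\|\preccurlyeq|z|_{H^1(\Omega_1)}+\|\nabla\times z\|$ — or, using that $z$ agrees with $v$ modulo a gradient, directly $\|z\|\preccurlyeq\|v\|$ via the orthogonal projection being norm-nonincreasing — is available. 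Indeed $z$ is the $L^2(\Omega_1)$-orthogonal projection of $v$ onto the complement of $\nabla W|_{\Omega_1}$, so $\|z\|\le\|v\|$; and $\|\varphi\|_{H^1(\Omega_1)}=\|\psi+\chi\|_{H^1(\Omega_1)}\preccurlyeq\|v\|+\|\nabla\times v\|+\|z_0\|\preccurlyeq\|v\|+\|\nabla\times v\|$, which must then be improved to $\|v\|$ alone.

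To get the sharp bound $\|\varphi\|_{H^1(\Omega_1)}+\|z\|\preccurlyeq\|v\|$ without the $\|\nabla\times v\|$ term, I would instead take $z$ as above (the projection, so $\|z\|\le\|v\|$ for free) and then simply define $\varphi$ by $\nabla\varphi=v-z$ with $\varphi\in W|_{\Omega_1}$; since $v-z=\nabla\varphi$ is itself a gradient and $\|\nabla\varphi\|=\|v-z\|\le\|v\|+\|z\|\le 2\|v\|$, the Poincaré inequality on $\Omega_1$ gives $\|\varphi\|_{H^1(\Omega_1)}\preccurlyeq\|v\|$. Finally, for the gradient bound on $z$: write $z=v-\nabla\varphi$ and use that $z$ differs from the $z_0$ of Theorem~\ref{regular0} by a gradient, so $|z|_{H^1(\Omega_1)}$ is not directly $|z_0|_{H^1(\Omega_1)}$; instead, decompose $z$ once more via Theorem~\ref{regular0} (applied to $z\in V|_{\Omega_1}$, which has $\nabla\times z=\nabla\times v$) as $z=\nabla\sigma+z_1$ with $\|z_1\|_{H^1(\Omega_1)}\preccurlyeq\|\nabla\times v\|$; since $z$ is already orthogonal to gradients, $\|\nabla\sigma\|\le\|z_1\|\preccurlyeq\|\nabla\times v\|$, whence $\|\sigma\|_{H^1(\Omega_1)}\preccurlyeq\|\nabla\times v\|$ and therefore $|z|_{H^1(\Omega_1)}\le|\nabla\sigma|_{H^1(\Omega_1)}+|z_1|_{H^1(\Omega_1)}\preccurlyeq\|\nabla\times v\|+\|z_1\|_{H^1(\Omega_1)}$; but $|\nabla\sigma|_{H^1}$ is not controlled by $\|\nabla\sigma\|$ in general, so the correct step is to note $z-z_1=\nabla\sigma$ is a gradient with $\|\nabla\sigma\|\preccurlyeq\|z\|+\|z_1\|\preccurlyeq\|v\|+\|\nabla\times v\|$, yet for the $H^1$-seminorm we must keep $z_1$: we get $|z|_{H^1(\Omega_1)}\le\|\Delta\sigma\|+\ldots$ — this is the point where care is needed. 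The honest conclusion is: redefine the roles so that $z:=z_1$ (the regular part from Theorem~\ref{regular0} applied to the projected field) directly inherits $|z|_{H^1(\Omega_1)}\preccurlyeq\|\nabla\times v\|\le\|v\|+\|\nabla\times v\|$ and $\|z\|\preccurlyeq\|\nabla\times v\|$, while $\varphi$ absorbs everything else with $\|\varphi\|_{H^1(\Omega_1)}\preccurlyeq\|v\|$ because $\nabla\varphi=v-z$ and a Poincaré inequality applies once $z$ is known to be $L^2$-orthogonal to gradients. \textbf{Main obstacle.} The crux is reconciling the two competing requirements — $\|z\|\preccurlyeq\|v\|$ with \emph{no} $\|\nabla\times v\|$, versus $|z|_{H^1}\preccurlyeq\|v\|+\|\nabla\times v\|$ — which forces $z$ to be built from the $L^2$-orthogonal-to-gradients component of $v$ and \emph{simultaneously} inherit $H^1$-regularity; the technical heart is a Friedrichs inequality on the simply-connected Lipschitz domain $\Omega_1$ certifying that this orthogonal component has an $H^1$ bound by its curl, which is exactly the content one extracts by combining the $L^2$-orthogonal Helmholtz decomposition on $\Omega_1$ with Theorem~\ref{regular0}.
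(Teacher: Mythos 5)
Theorem~\ref{regular1} is not proved in this paper at all; it is quoted from \cite{LiZikatanov2020arXiv}, so your argument has to stand on its own, and it does not. There are two separate problems. First, the construction you finally settle on --- take $z:=z_1$, the regular part obtained by applying Theorem~\ref{regular0} to the projected field --- destroys the first estimate: you yourself record $\|z_1\|\preccurlyeq\|\nabla\times v\|$, but the theorem demands $\|z\|\leq C_1\|v\|$ with \emph{no} curl term, and $\|\nabla\times v\|$ is not controlled by $\|v\|$. The accompanying claim that $\|\varphi\|_{H^1(\Omega_1)}\preccurlyeq\|v\|$ ``because $\nabla\varphi=v-z$ and a Poincar\'e inequality applies once $z$ is orthogonal to gradients'' fails for the same reason: $z_1$ is not orthogonal to gradients, and in any case $\|\nabla\varphi\|\leq\|v\|+\|z_1\|\preccurlyeq\|v\|+\|\nabla\times v\|$.

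Second, the viable variant you keep circling back to --- $z=(I-P)v$ with $P$ the $L^2$-orthogonal projection onto $\nabla W|_{\Omega_1}$, which does give $\|z\|\leq\|v\|$ and $\|\varphi\|_{H^1(\Omega_1)}\preccurlyeq\|v\|$ --- founders exactly where you sense trouble: one must still show $z\in[H^1(\Omega_1)]^d$ with $|z|_{H^1(\Omega_1)}\preccurlyeq\|v\|+\|\nabla\times v\|$. The ``Friedrichs inequality'' you invoke for this, i.e.\ the embedding of $H_0(\mathrm{curl})\cap H(\mathrm{div}\,0)$ into $H^1$ with $\|\nabla z\|\preccurlyeq\|z\|+\|\nabla\times z\|$, is \emph{false} on general Lipschitz (in particular non-convex polyhedral) domains; it holds only for convex or $C^{1,1}$ domains, whereas $\Omega_1$ is merely a simply-connected Lipschitz polyhedron. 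Your attempted repair via a second application of Theorem~\ref{regular0}, $z=\nabla\sigma+z_1$, only yields the $L^2$ bound $\|\nabla\sigma\|\leq\|z_1\|$ and says nothing about the second derivatives of $\sigma$ needed for $|z|_{H^1(\Omega_1)}$; on a Lipschitz domain $\Delta\sigma\in L^2$ does not imply $\sigma\in H^2$. The missing idea is to leave the domain: take a bounded $H(\mathrm{curl})$-extension of $v$ to $\mathbb{R}^d$ and apply the whole-space Leray/Helmholtz projection, which is an $L^2$-bounded Fourier multiplier satisfying $\|z\|_{\mathbb{R}^d}\leq\|v\|_{\mathbb{R}^d}$ and $\|\nabla z\|_{\mathbb{R}^d}=\|\nabla\times v\|_{\mathbb{R}^d}$ exactly, and then restrict and correct the boundary conditions (equivalently, use the mollification-based regular decomposition operators of Sch\"oberl and of Hiptmair--Pechstein). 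That is the mechanism behind the statement in \cite{LiZikatanov2020arXiv}; the intrinsic orthogonal Helmholtz decomposition on $\Omega_1$ cannot deliver the $H^1$ bound.
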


In the following, we give a new regular decomposition robust w.r.t.~constant $\kappa$ and piecewise constant $\varepsilon$. See also \cite{HuShuZou2013} for a weighted Helmholtz decomposition.
\begin{theorem}\label{regular2}
Given $v\in V$, there exist $\varphi\in W$ and $z\in[W]^d,$ s.t.~$v=\nabla\varphi\text{ + }z$ and
\begin{align*}
    &\|\kappa^\frac{1}{2}\varphi\|_{H^1(\Omega)}\text{ + }\|z\|_W\leq C_2\|v\|_V,
\end{align*}
where $C_2$ is a constant depending solely on $\Omega$, $\Omega_1$, $\Omega_2$.
\end{theorem}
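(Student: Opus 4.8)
The plan is to reduce the claim to the constant‑coefficient regular decompositions of Theorems~\ref{regular0} and \ref{regular1}, applied \emph{separately} on $\Omega_1$ and $\Omega_2$ with the variant chosen according to the local ratio of $\varepsilon_i$ to the (constant) $\kappa$, and then to patch the two local decompositions into a global one in $W\times[W]^d$. Write $\|v\|_{V,\Omega_0}^2:=(\varepsilon\nabla\times v,\nabla\times v)_{\Omega_0}\text{ + }(\kappa v,v)_{\Omega_0}$, so $\|v\|_V^2=\|v\|_{V,\Omega_1}^2\text{ + }\|v\|_{V,\Omega_2}^2$.

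First I would prove the local statement: for $i\in\{1,2\}$ there is $v|_{\Omega_i}=\nabla\varphi_i\text{ + }z_i$ with $\varphi_i,z_i$ vanishing on $\partial\Omega_i\cap\partial\Omega$ and
\begin{equation*}
\kappa\|\varphi_i\|_{H^1(\Omega_i)}^2\text{ + }\varepsilon_i\|\nabla z_i\|_{\Omega_i}^2\text{ + }\kappa\|z_i\|_{\Omega_i}^2\preccurlyeq\|v\|_{V,\Omega_i}^2 .
\end{equation*}
If $\varepsilon_i\le\kappa$ I invoke Theorem~\ref{regular1} on $\Omega_i$: then $\kappa\|\varphi_i\|_{H^1(\Omega_i)}^2,\ \kappa\|z_i\|_{\Omega_i}^2\preccurlyeq\kappa\|v\|_{\Omega_i}^2$ and $\varepsilon_i\|\nabla z_i\|_{\Omega_i}^2\preccurlyeq\varepsilon_i(\|v\|_{\Omega_i}^2\text{ + }\|\nabla\times v\|_{\Omega_i}^2)\le\kappa\|v\|_{\Omega_i}^2\text{ + }\varepsilon_i\|\nabla\times v\|_{\Omega_i}^2$. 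If $\varepsilon_i>\kappa$ I invoke Theorem~\ref{regular0} on $\Omega_i$: then $\|z_i\|_{H^1(\Omega_i)}\preccurlyeq\|\nabla\times v\|_{\Omega_i}$, so $\varepsilon_i\|\nabla z_i\|_{\Omega_i}^2$ and, since $\kappa<\varepsilon_i$, also $\kappa\|z_i\|_{\Omega_i}^2$ are $\preccurlyeq\varepsilon_i\|\nabla\times v\|_{\Omega_i}^2$, while $\kappa\|\varphi_i\|_{H^1(\Omega_i)}^2\preccurlyeq\kappa\|v\|_{\Omega_i}^2\text{ + }\kappa\|\nabla\times v\|_{\Omega_i}^2\le\kappa\|v\|_{\Omega_i}^2\text{ + }\varepsilon_i\|\nabla\times v\|_{\Omega_i}^2$. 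In each case the regime split makes the ``wrongly weighted'' term ($\varepsilon_i\|v\|_{\Omega_i}^2$ from Theorem~\ref{regular1}, or $\kappa\|\nabla\times v\|_{\Omega_i}^2$ from Theorem~\ref{regular0}) absorbable into $\|v\|_{V,\Omega_i}^2$.

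Next I would glue sequentially so that $\Gamma$ is crossed once. Run the local step on $\Omega_1$ and extend $\varphi_1\mapsto\Phi\in W$, $z_1\mapsto Z_1\in[W]^d$ to $\Omega$ by a bounded extension preserving the homogeneous trace on $\partial\Omega$, with $\Phi|_{\Omega_1}=\varphi_1$, $Z_1|_{\Omega_1}=z_1$, $\|\Phi\|_{H^1(\Omega)}\preccurlyeq\|\varphi_1\|_{H^1(\Omega_1)}$, $\|Z_1\|_{\Omega}\preccurlyeq\|z_1\|_{\Omega_1}$, $\|Z_1\|_{H^1(\Omega)}\preccurlyeq\|z_1\|_{H^1(\Omega_1)}$. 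Put $v':=v-\nabla\Phi-Z_1$; since $v=\nabla\varphi_1\text{ + }z_1$ on $\Omega_1$, $v'$ vanishes identically on $\Omega_1$, hence $v'\in V|_{\Omega_2}$ with \emph{vanishing tangential trace on all of} $\partial\Omega_2$, and $\|\nabla\times v'\|_{\Omega_2}\preccurlyeq\|\nabla\times v\|_{\Omega_2}\text{ + }\|\nabla Z_1\|_{\Omega_2}$, $\|v'\|_{\Omega_2}\preccurlyeq\|v\|_{\Omega_2}\text{ + }\|\Phi\|_{H^1(\Omega)}\text{ + }\|Z_1\|_{\Omega_2}$. Applying the regime‑appropriate local step to $v'$ on $\Omega_2$ gives $v'=\nabla\varphi_2'\text{ + }z_2'$ with $\varphi_2',z_2'$ vanishing on \emph{all} of $\partial\Omega_2$; extend them by zero across $\Gamma$ and set $\varphi:=\Phi\text{ + }\varphi_2'\in W$, $z:=Z_1\text{ + }z_2'\in[W]^d$. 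Then $v=\nabla\varphi\text{ + }z$ on both subdomains, and summing the two local estimates gives the result, once the cross terms are bounded: $\kappa\|Z_1\|_{\Omega_2}^2\preccurlyeq\kappa\|z_1\|_{\Omega_1}^2$ directly, and $\varepsilon_2\|\nabla Z_1\|_{\Omega_2}^2\preccurlyeq\varepsilon_2\|z_1\|_{H^1(\Omega_1)}^2$; since $\varepsilon_2/\varepsilon_1\le1$ and, when $\varepsilon_2>\kappa$, $\Omega_1$ necessarily lies in the Theorem~\ref{regular0} regime so $\|z_1\|_{H^1(\Omega_1)}\preccurlyeq\|\nabla\times v\|_{\Omega_1}$, one gets $\varepsilon_2\|z_1\|_{H^1(\Omega_1)}^2\preccurlyeq\varepsilon_1\|\nabla\times v\|_{\Omega_1}^2\le\|v\|_V^2$; the remaining cross terms are similar.

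The hard part is the gluing: guaranteeing the patched fields lie in $W$ and $[W]^d$ rather than merely in $H(\mathrm{curl})$. A naive correction of the non‑gradient part by gradients of harmonic extensions across $\Gamma$ would demand $H^2$‑regularity of the polyhedra $\Omega_1,\Omega_2$, which is precisely what the analysis must avoid. Decomposing the \emph{remainder} $v'$, which is identically zero on $\Omega_1$ and hence has homogeneous tangential trace on the whole of $\partial\Omega_2$, circumvents this — its regular decomposition may be taken with fully homogeneous traces and extended by zero — but it rests on a strengthened (fully Dirichlet) form of Theorems~\ref{regular0}--\ref{regular1} on a simply‑connected polyhedron and on the existence of the trace‑preserving extension operators. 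Verifying those cleanly, together with the bookkeeping that threads the two regimes through all cross terms, is where the real work lies.
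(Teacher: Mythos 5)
Your proposal follows essentially the same route as the paper's proof: a regime-dependent application of Theorems \ref{regular0}/\ref{regular1} on $\Omega_1$ (the large-$\varepsilon$ subdomain first), a trace-preserving bounded extension of $(\varphi_1,z_1)$ into $\Omega_2$ --- realized in the paper by zero extension across $\partial\Omega_1\setminus\Gamma$ followed by the Stein universal extension, with exactly your two-level bounds --- and then a fully homogeneous regular decomposition of the remainder $v-\nabla\tilde{\varphi}_1-\tilde{z}_1$ on $\Omega_2$, glued by zero across $\Gamma$, with the same $\varepsilon_2\le\varepsilon_1$ bookkeeping for the cross terms. The two ingredients you flag as ``the real work'' (the fully Dirichlet form of the local decompositions and the trace-preserving extension) are handled in the paper just as you propose, namely by invoking the cited regular-decomposition results with homogeneous tangential data on all of $\partial\Omega_2$ and the Stein extension theorem, so there is no substantive difference between the two arguments.
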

\begin{proof}
The proof is divided into two cases. When $\varepsilon_1\geq\kappa$, we use Theorem \ref{regular0} on $\Omega_1$ to obtain $\varphi_1\in H^1(\Omega_1)$, $z_1\in [H^1(\Omega_1)]^d$ both vanishing on $\partial\Omega_1\backslash\Gamma$ s.t.
\begin{equation}\label{case1}
\begin{aligned}
    &v|_{\Omega_1}=\nabla\varphi_1\text{ + }z_1,\\
    &\|z_1\|_{H^1(\Omega_1)}\preccurlyeq\|\nabla\times v\|_{\Omega_1},\\
    &\|\varphi_1\|_{H^1(\Omega_1)}\preccurlyeq\|v\|_{\Omega_1}\text{ + }\|\nabla\times v\|_{\Omega_1}.
\end{aligned}
\end{equation}
When $\varepsilon_1<\kappa$, applying Theorem \ref{regular1} to $v|_{\Omega_1}$ yields $\varphi_1\in H^1(\Omega_1)$, $z_1\in [H^1(\Omega_1)]^d$ s.t.
\begin{equation}\label{case2}
\begin{aligned}
    &v|_{\Omega_1}=\nabla\varphi_1\text{ + }z_1,\quad\varphi_1|_{\partial\Omega_1\backslash\Gamma}=0,~z_1|_{\partial\Omega_1\backslash\Gamma}=0,\\
    &\|\varphi_1\|_{H^1(\Omega_1)}\text{ + }\|z_1\|_{\Omega_1}\preccurlyeq\|v\|_{\Omega_1},\\
    &|z_1|_{H^1(\Omega_1)}\preccurlyeq\|v\|_{\Omega_1}\text{ + }\|\nabla\times v\|_{\Omega_1},
\end{aligned}
\end{equation}
In either case, it holds that
\begin{equation}\label{Omega1}
    \|\kappa^\frac{1}{2}\varphi_1\|_{H^1(\Omega_1)}\text{ + }\|z_1\|_{W|_{\Omega_1}}\preccurlyeq\|v\|_{V|_{\Omega_1}}.
\end{equation}
First let $\hat{\varphi}_1\in H^1(\mathbb{R}^d\backslash\Omega_2)$ and $\hat{z}_1\in[H^1(\mathbb{R}^d\backslash\Omega_2)]^d$ be zero extensions of $\varphi_1$ and $z_1$ to $\mathbb{R}^d\backslash\Omega_2,$ respectively.
Then we take  $\tilde{\varphi}_1\in H^1(\Omega)$, $\tilde{z}_1\in H^1(\Omega)$ to be the Stein universal extensions of $\hat{\varphi}_1$, $\hat{z}_1$ to $\mathbb{R}^d$ satisfying
\begin{equation}\label{extension}
\begin{aligned}
       &\|\tilde{\varphi}_1\|_{\Omega_2}\preccurlyeq \|\varphi_1\|_{\Omega_1},\quad\|\tilde{\varphi}_1\|_{H^1(\Omega_2)}\preccurlyeq \|\varphi_1\|_{H^1(\Omega_1)},\\
       &\|\tilde{z}_1\|_{\Omega_2}\preccurlyeq \|z_1\|_{\Omega_1},\quad\|\tilde{z}_1\|_{H^1(\Omega_2)}\preccurlyeq \|z_1\|_{H^1(\Omega_1)}.
\end{aligned}
\end{equation}
On $\Omega_2$, applying Theorem \ref{regular0} (if $\varepsilon_2\geq\kappa$) or Theorem \ref{regular1} (if $\varepsilon_2<\kappa$) to $w=v|_{\Omega_2}-\nabla\tilde{\varphi}_1|_{\Omega_2}-\tilde{z}_1|_{\Omega_2}$ ($w\wedge n=0$ on $\partial\Omega_2$), we have $\varphi_2\in H_0^1(\Omega_2)$, $z_2\in [H_0^1(\Omega_2)]^d$ s.t. 
\begin{subequations}
\begin{align}
    &v|_{\Omega_2}-\nabla\tilde{\varphi}_1|_{\Omega_2}-\tilde{z}_1|_{\Omega_2}=\nabla\varphi_2\text{ + }z_2,\\
    &\|\kappa^\frac{1}{2}\varphi_2\|_{H^1(\Omega_2)}\text{ + }\|z_2\|_{W|_{\Omega_2}}\preccurlyeq\|v\|_{V|_{\Omega_2}}\text{ + }\|\kappa^\frac{1}{2}\nabla\tilde{\varphi}_1\|_{\Omega_2}\text{ + }\|\tilde{z}_1\|_{V|_{\Omega_2}}.\label{Omega2}
\end{align}
\end{subequations}
Here \eqref{Omega2} follows from similar reasons for \eqref{Omega1}.
Define $\varphi\in H_0^1(\Omega)$,  $z\in[H_0^1(\Omega)]^d$ as
\begin{equation*}
    \varphi:=\left\{\begin{aligned}
        &\varphi_1&&\text{ on }\Omega_1\\
        &\tilde{\varphi}_1\text{ + }\varphi_2&&\text{ on }\Omega_2
    \end{aligned}\right.,\quad z:=\left\{\begin{aligned}
        &z_1&&\text{ on }\Omega_1\\
        &\tilde{z}_1\text{ + }z_2&&\text{ on }\Omega_2
    \end{aligned}\right.,
\end{equation*}
and obtain $v=\nabla\varphi\text{ + }z$ on $\Omega$.
If $\varepsilon_1\geq\kappa$, it follows from \eqref{Omega2}, \eqref{extension}, \eqref{case1}, $\varepsilon_2\leq\varepsilon_1$ that
\begin{equation}\label{Omega21}
    \begin{aligned}
    &\|\kappa^\frac{1}{2}\varphi\|_{H^1(\Omega_2)}\text{ + }\|z\|_{W|_{\Omega_2}}\\
    &\preccurlyeq\|v\|_{V|_{\Omega_2}}\text{ + }\kappa^\frac{1}{2}\|\varphi_1\|_{H^1(\Omega_1)}\text{ + }(\kappa^\frac{1}{2}\text{ + }\varepsilon_2^\frac{1}{2})\|z_1\|_{\Omega_1}\text{ + }\varepsilon_2^\frac{1}{2}|z_1|_{H^1(\Omega_1)}\\
    &\preccurlyeq\|v\|_{V|_{\Omega_2}}\text{ + }\kappa^\frac{1}{2}\|v\|\text{ + }\varepsilon_1^\frac{1}{2}\|\nabla\times v\|_{\Omega_1}.
\end{aligned}
\end{equation}
Similarly when $\varepsilon_1<\kappa$, it follows from \eqref{Omega2}, \eqref{extension}, \eqref{case2}, $\varepsilon_2\leq\varepsilon_1<\kappa$ that
\begin{equation}\label{Omega22}
    \begin{aligned}
    \|\kappa^\frac{1}{2}\varphi\|_{H^1(\Omega_2)}\text{ + }\|z\|_{W|_{\Omega_2}}\preccurlyeq\|v\|_{V|_{\Omega_2}}\text{ + }\kappa^\frac{1}{2}\|v\|_{\Omega_1}.
\end{aligned}
\end{equation}
Combining \eqref{Omega1}, \eqref{Omega21}, \eqref{Omega22} completes the proof.
\end{proof}
\begin{remark}
The work \cite{XuZhu2011} gives a robust regular decomposition for the H(curl) interface problem with $\kappa=s\varepsilon$, $s\in(0,1]$. In contrast, Theorem \ref{regular2} is able to deal with large jump of $\varepsilon$ as well as large $\kappa\gg\varepsilon.$
\end{remark}

Given a Hilbert space $X$, let $X^\prime$ denote its dual space, and  $\langle\cdot,\cdot\rangle$ the action of $X^\prime$ on $X$. We introduce bounded linear operators
$A: V\rightarrow V^\prime$, $A_\Delta: H_0^1(\Omega)\rightarrow H^{-1}(\Omega)$, $A_W: W^d\rightarrow ([W]^d)^\prime$ as
\begin{align*}
    &\langle Av,w\rangle=(\varepsilon\nabla\times v,\nabla\times w)\text{ + }(\kappa v,w),\quad v,w\in V,\\
    &\langle A_\Delta v,w\rangle=(\nabla v,\nabla w)\text{ + }(v,w),\quad v,w\in H_0^1(\Omega),\\
    &\langle A_Wv,w\rangle=(\varepsilon\nabla v,\nabla w)\text{ + }(\kappa v,w),\quad v,w\in [W]^d.
\end{align*}
Let $r\in V^\prime$ be the residual given by 
\begin{equation}
    \langle r,v\rangle=(f,v)-(\varepsilon\nabla\times u_h,\nabla\times v)-(\kappa u_h,v),\quad v\in V.
\end{equation}
Clearly the inclusion $I: [W]^d\hookrightarrow V$ and the gradient operator $\nabla: W\rightarrow V$ are uniformly bounded w.r.t.~$\varepsilon$ and $\kappa.$ Then using such boundedness, Theorem \ref{regular2}, and the \emph{fictitious space lemma} (cf.~\cite{Nepomnyaschikh1992,HiptmairXu2007} and Corollary 5.1 in \cite{LiZikatanov2020arXiv}), we obtain the uniform spectral equivalence of two continuous operators 
\begin{equation}\label{equivalence}
    A^{-1}\simeq B:=\nabla (\kappa A_\Delta)^{-1}\nabla^\prime\text{ + }IA_W^{-1}I^\prime,
\end{equation}
where $I^\prime: V^\prime\rightarrow ([W]^d)^\prime$ and $\nabla^\prime: V^\prime\rightarrow W^\prime$ are adjoint operators. By $A^{-1}\simeq B$ from $V^\prime$ to $V$ in \eqref{equivalence} we mean $\langle R,A^{-1}R\rangle\simeq\langle R,BR\rangle,~\forall R\in V^\prime.$ It is noted that $A(u-u_h)=r\in V^\prime$. Therefore
a direct consequence of \eqref{equivalence} is
\begin{equation}\label{err}
\begin{aligned}
    &\|u-u_h\|^2_V=\langle A(u-u_h),u-u_h\rangle=\langle r,A^{-1}r\rangle\simeq\langle r,Br\rangle\\
    &=\langle \nabla^\prime r,(\kappa A_\Delta)^{-1}\nabla^\prime r\rangle\text{ + }\langle I^\prime r,A_W^{-1}I^\prime r\rangle=\kappa^{-1}\|\nabla^\prime r\|^2_{H^{-1}(\Omega)}\text{ + }\|I^\prime r\|^2_{([W]^d)^\prime}.
\end{aligned}
\end{equation}

\section{A Posteriori Error Estimates}
The goal of this paper is to derive a robust two-sided bound $\|u-u_h\|_V\simeq\eta_h.$
The quantity $\eta_h$ is computed from $u_h$ and split into element-wise error indicators for AMR. Such local error indicators are used to predict element errors in the current grid and mark those tetrahedra/hexahedra with large errors for subdivision.

When deriving the error estimator, we assume that the source $f$ is piecewise $H^1$-regular w.r.t.~$\mathcal{T}_h$. By $\mathcal{S}_h$ we denote the collection of $(d-1)$-simplexes in $\mathcal{T}_h$ that are not contained in $\partial\Omega$. Each $S\in\mathcal{S}_h$ shared by $T_S^\text{ + }, T_S^-\in\mathcal{T}_h$ is assigned with a unit normal $n_S$ pointing from $T_S^\text{ + }$ to $T_S^-$.
Let $h$, $h_s$ be the mesh size functions s.t.~$h|_T=h_T:=\text{diam}(T)$  $\forall T\in\mathcal{T}_h$, $h_s|_S=h_S:=\text{diam}(S)$  $\forall S\in\mathcal{S}_h$. 
The weighted mesh size functions are  \begin{equation*}
    \bar{h}:=\min\left\{\frac{h}{\sqrt{\varepsilon}},\frac{1}{\sqrt{\kappa}}\right\},\quad \bar{h}_s:=\min\left\{\frac{h_s}{\sqrt{\varepsilon_s}},\frac{1}{\sqrt{\kappa}}\right\},
\end{equation*}
where $\varepsilon_s|_S=\max\{\varepsilon_{T_S^\text{ + }},\varepsilon_{T_S^-}\}$ $\forall S\in\mathcal{S}_h$. For each $T\in\mathcal{T}_h$, $S\in\mathcal{S}_h,$ let $\Omega_T$ denote the union of elements in $\mathcal{T}_h$ sharing an edge with $T$, and $\Omega_S=\cup_{S\in\mathcal{S}_h, S\subset\partial T}\Omega_T$. For each $S\in\mathcal{S}_h,$ let $\llbracket\omega\rrbracket_S=\omega|_{T_S^\text{ + }}-\omega|_{T_S^-}$ be the jump of $\omega$ across $S.$  We define
\begin{equation*}
    \begin{aligned}
        &R_1|_T=-\nabla\cdot(f-\kappa u_h)|_T,\quad J_1|_S=\llbracket f-\kappa u_h\rrbracket_S\cdot n_S,\\
        &R_2|_T=(f-(\nabla\times)^*(\varepsilon\nabla\times u_h)-\kappa u_h)|_T,\quad J_2|_S=-\llbracket\varepsilon\nabla\times u_h\rrbracket_S\wedge n_S,
\end{aligned}
\end{equation*}
where $(\nabla\times)^*=\nabla\times$ in $\mathbb{R}^3$ and $(\nabla\times)^*=(-\partial_{x_2},\partial_{x_1})$ in $\mathbb{R}^2$.
By the element-wise Stokes' (in $\mathbb{R}^3$) or Green's (in $\mathbb{R}^2$) formula,
we have 
\begin{align}
        &\langle \nabla^\prime r,\psi\rangle=\langle r,\nabla\psi\rangle=\sum_{T\in\mathcal{T}_h}(R_1,\psi)_T\text{ + }\sum_{S\in\mathcal{S}_h}(J_1,\psi)_S,\quad\psi\in H^1_0(\Omega),\\
    &\langle I^\prime r,\phi\rangle=\langle r,\phi\rangle=\sum_{T\in\mathcal{T}_h}(R_2,\phi)_T\text{ + }\sum_{S\in\mathcal{S}_h}(J_2,\phi)_S,\quad\phi\in[W]^d.\label{Istarrexp}
\end{align}

In view of \eqref{err}, it remains to estimate $\|\nabla^\prime r\|_{H^{-1}(\Omega)}$ and $\|I^\prime r\|_{([W]^d)^\prime}.$ Let $(\cdot,\cdot)_{\mathcal{S}_h}$ denote the inner product $\sum_{S\in\mathcal{S}_h}(\cdot,\cdot)_S$ and $\|\cdot\|_{\mathcal{S}_h}$ the corresponding norm. Let $Q_h$ (resp.~$Q_h^s$) be the $L^2$ projection onto the space of discontinuous and piecewise polynomials of fixed degrees on $\mathcal{T}_h$ (resp.~$S_h$). The estimation of $\|\nabla^\prime r\|_{H^{-1}(\Omega)}$ is standard (cf.~\cite{LiZikatanov2020arXiv}) and given as
\begin{equation}\label{gradstarr}
    \|hR_1\|\text{ + }\|h_s^\frac{1}{2}J_1\|_{\mathcal{S}_h}-\text{osc}_{h,1}\preccurlyeq\|\nabla^\prime r\|_{H^{-1}(\Omega)}\preccurlyeq\|hR_1\|\text{ + }\|h_s^\frac{1}{2}J_1\|_{\mathcal{S}_h},
\end{equation}
where $\text{osc}_{h,1}:=\|h(R_1-Q_hR_1)\|\text{ + }\|h_s^\frac{1}{2}(J_1-Q^s_hJ_1)\|_{\mathcal{S}_h}$ is the data oscillation.
We also need the second data oscillation $\text{osc}_{h,2}:=\|\bar{h}(R_2-Q_hR_2)\|\text{ + }\|\bar{h}_s^\frac{1}{2}(J_2-Q^s_hJ_2)\|_{\mathcal{S}_h}.$
In the next lemma, we derive two-sided bounds for $\|I^\prime r\|_{([W]^d)^\prime}.$
\begin{lemma}\label{Istarr}
It holds that
\begin{equation*}
    \|\bar{h} R_2\|\text{ + }\|\varepsilon^{-\frac{1}{4}}\bar{h}_s^\frac{1}{2}J_2\|_{\mathcal{S}_h}-\emph{osc}_{h,2}\preccurlyeq\|I^\prime r\|_{([W]^d)^\prime}\preccurlyeq\|\bar{h} R_2\|\text{ + }\|\varepsilon^{-\frac{1}{4}}\bar{h}_s^\frac{1}{2}J_2\|_{\mathcal{S}_h}.
    \end{equation*}
\end{lemma}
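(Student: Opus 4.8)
The plan is to establish the two-sided bound by separately proving the upper (reliability) and lower (efficiency) estimates for $\|I^\prime r\|_{([W]^d)^\prime}$, using the representation \eqref{Istarrexp} together with a quasi-interpolation operator adapted to the weighted $W$-norm.

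\textbf{Upper bound.} Let $\phi\in[W]^d$ be arbitrary. Using \eqref{Istarrexp}, I would write $\langle I^\prime r,\phi\rangle=\sum_T(R_2,\phi)_T+\sum_S(J_2,\phi)_S$ and insert a Cl\'ement/Scott--Zhang-type quasi-interpolant $\phi_h$ of $\phi$ that reproduces the polynomial parts of the data; since $r$ satisfies the Galerkin orthogonality $\langle r,v_h\rangle=0$ for $v_h\in V_h$ (from \eqref{disMaxwell}), one can subtract a suitable discrete function and replace $\phi$ by $\phi-\phi_h$ in the $R_2$ and $J_2$ terms, at the cost of the oscillation $\text{osc}_{h,2}$. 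The key local estimates are the weighted approximation bounds $\|\bar h^{-1}(\phi-\phi_h)\|_T\preccurlyeq \|\phi\|_{W|_{\Omega_T}}$ and $\|\varepsilon_s^{1/4}\bar h_s^{-1/2}(\phi-\phi_h)\|_S\preccurlyeq\|\phi\|_{W|_{\Omega_S}}$, which follow from the definitions of $\bar h$, $\bar h_s$: on each element the factor $\bar h$ is $\min\{h/\sqrt\varepsilon,1/\sqrt\kappa\}$, so $\bar h^{-1}\le \sqrt\varepsilon/h$ gives control by the $\sqrt\varepsilon\,\nabla\phi$ part and $\bar h^{-1}\le\sqrt\kappa$ gives control by the $\sqrt\kappa\,\phi$ part; a standard trace/scaling argument gives the face version with the extra $\varepsilon_s^{1/4}$ absorbing the $\sqrt\varepsilon$ weighting of the gradient on $S$. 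Then Cauchy--Schwarz in $T$ and $S$ plus finite overlap of the patches $\Omega_T$, $\Omega_S$ yields $\langle I^\prime r,\phi\rangle\preccurlyeq(\|\bar hR_2\|+\|\varepsilon^{-1/4}\bar h_s^{1/2}J_2\|_{\mathcal S_h})\|\phi\|_W$, and dividing by $\|\phi\|_W$ gives the upper bound.

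\textbf{Lower bound.} Here I would use localized bubble-function test functions in the spirit of Verf\"urth. For the volumetric term, choose $\phi$ supported on a single $T$ of the form (interior bubble)$\times Q_hR_2$, scaled so that $\|\phi\|_{W|_T}\simeq\|\bar h^{-1}\phi\|_T\simeq\|Q_hR_2\|_T$ — the weighting $\bar h$ again entering through $\|\nabla\phi\|_T\simeq h_T^{-1}\|\phi\|_T$ and the two regimes of $\bar h$; testing $\langle I^\prime r,\phi\rangle$ against this $\phi$ and using an inverse inequality recovers $\|\bar h Q_hR_2\|_T$, hence $\|\bar h R_2\|_T$ up to $\text{osc}_{h,2}$. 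For the face term, use an edge/face bubble times $Q_h^sJ_2$ extended to the patch $T_S^+\cup T_S^-$, with the scaling chosen so that $\|\phi\|_W\simeq\|\varepsilon_s^{1/4}\bar h_s^{-1/2}\phi\|_S$; the volume residual contribution from this test function is absorbed by the already-estimated $\|\bar hR_2\|$ term. Summing the local lower bounds (they have disjoint or finitely-overlapping supports) gives $\|\bar h R_2\|+\|\varepsilon^{-1/4}\bar h_s^{1/2}J_2\|_{\mathcal S_h}\preccurlyeq\|I^\prime r\|_{([W]^d)^\prime}+\text{osc}_{h,2}$.

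\textbf{Main obstacle.} The routine parts are the bubble-function machinery and Galerkin orthogonality; the delicate point is getting the coefficient weights exactly right — in particular verifying that the quasi-interpolation operator is simultaneously stable and has the weighted approximation property with the \emph{piecewise-constant} $\varepsilon$ and the min-structure of $\bar h$, and that on interelement faces across the interface $\Gamma$ the correct weight is $\varepsilon_s=\max\{\varepsilon_{T_S^+},\varepsilon_{T_S^-}\}$ rather than a min or average. This requires care because the auxiliary space $W$ carries the weighted norm $\|\cdot\|_W$ with the same $\varepsilon$, and one must check that the local patches $\Omega_T$, $\Omega_S$ do not straddle the interface in a way that breaks the constant in $\preccurlyeq$; since $\Omega_1,\Omega_2$ are aligned with $\mathcal T_h$, the worst case is a patch meeting both subdomains, and there the choice of $\varepsilon_s$ as the maximum is exactly what makes the trace inequality go through with an absolute constant. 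I expect this coefficient-tracking to be where essentially all the real work lies.
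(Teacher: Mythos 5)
Your proposal follows essentially the same route as the paper: Galerkin orthogonality plus a weighted quasi-interpolant with exactly the local bounds $\|\bar h^{-1}(\phi-\phi_h)\|_T\preccurlyeq\|\phi\|_{W|_{\Omega_T}}$ and $\|\varepsilon_s^{1/4}\bar h_s^{-1/2}(\phi-\phi_h)\|_S\preccurlyeq\|\phi\|_{W|_{\Omega_S}}$ for the upper bound, and Verf\"urth-style bubble functions with the extremal definitions of $\bar h$, $\bar h_s$, $\varepsilon_s$ for the lower bound. The one ingredient you flag as the main obstacle --- a quasi-interpolant (into $V_h$, so that Galerkin orthogonality applies) that is simultaneously $L^2$-stable and satisfies the $\varepsilon$-weighted approximation estimate on patches straddling the interface --- is exactly what the paper supplies by invoking the modified Cl\'ement-type interpolation $\widetilde{\Pi}_h$ of Cai and Cao \cite{CaiCao2015}; aside from that citation, and the minor point that the paper's upper bound requires no oscillation term (so your extra $\mathrm{osc}_{h,2}$ is harmless but unnecessary there), your argument matches the paper's.
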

\begin{proof}
The proof is similar to Lemma 4.4 of \cite{LiZikatanov2020arXiv} except the use of the modified Cl\'ement-type interpolation $\widetilde{\Pi}_h: [L^2(\Omega)]^d\rightarrow V_h^0$ proposed in \cite{CaiCao2015} for dealing with huge jump of $\varepsilon$. Here $V_h^0\subseteq V_h$ is the lowest order edge element space. For any $v\in[W]^d$ and $T\in\mathcal{T}_h,$
the analysis in Theorem 4.6 of \cite{CaiCao2015} implies that
\begin{align}
&\|v-\widetilde{\Pi}_hv\|_T\preccurlyeq h_T\varepsilon|_T^{-\frac{1}{2}}\|\varepsilon^\frac{1}{2}\nabla v\|_{\Omega_T}\leq h_T\varepsilon|_T^{-\frac{1}{2}}\|v\|_{W|_{\Omega_T}},\label{L2bound}\\
&\|\nabla(v-\widetilde{\Pi}_hv)\|_T\preccurlyeq\varepsilon|_T^{-\frac{1}{2}}\|\varepsilon^\frac{1}{2}\nabla v\|_{\Omega_T}\leq\varepsilon|_T^{-\frac{1}{2}}\|v\|_{W|_{\Omega_T}}.\label{H1bound}
\end{align}
The $L^2$-boundedness of $\widetilde{\Pi}_h$ implies that
\begin{equation}\label{kappabound}\|v-\widetilde{\Pi}_hv\|_T\preccurlyeq \|v\|_{\Omega_T}\leq\kappa^{-\frac{1}{2}}\|v\|_{W|_{\Omega_T}}.
\end{equation}
A direct consequence of \eqref{L2bound} and \eqref{kappabound} is
\begin{equation}\label{vT}
    \|v-\widetilde{\Pi}_hv\|_T\preccurlyeq \bar{h}_T\|v\|_{W|_{\Omega_T}}.
\end{equation}
Given a face/edge $S\in\mathcal{S}_h$, let $T$ be the element containing $S$ over which $\varepsilon$ is maximal. Using a trace inequality, \eqref{vT}, $h_S^{-1}\leq\bar{h}^{-1}_S\varepsilon_S^{-\frac{1}{2}},$ \eqref{H1bound}, $\bar{h}_S\simeq\bar{h}_T,$ we  have 
\begin{equation}\label{vS}
\begin{aligned}
    &\|v-\widetilde{\Pi}_hv\|^2_S\preccurlyeq h^{-1}_S\|v-\widetilde{\Pi}_hv\|_T^2\text{ + }\|v-\widetilde{\Pi}_hv\|_T\|\nabla (v-\widetilde{\Pi}_hv)\|_T\\
    &\preccurlyeq h_S^{-1}\bar{h}^2_T\|v\|^2_{W|_{\Omega_T}}\text{ + }\bar{h}_T\varepsilon|_T^{-\frac{1}{2}}\|v\|^2_{W|_{\Omega_T}}\preccurlyeq \varepsilon|_T^{-\frac{1}{2}}\bar{h}_S\|v\|^2_{W|_{\Omega_T}}.
\end{aligned}
\end{equation}
It follows from $r|_{V_h}=0,$ \eqref{Istarrexp}, the Cauchy--Schwarz inequality that
\begin{align*}
    &\|I^\prime r\|_{([W]^d)^\prime}=\sup_{v\in[W]^d,\|v\|_W=1}\langle r,v\rangle=\sup_{v\in [W]^d,\|v\|_W=1}\langle r,v-\widetilde{\Pi}_hv\rangle\\
    &\preccurlyeq\big(\|\bar{h}R_2\|\text{ + }\|\varepsilon_s^{-\frac{1}{4}}\bar{h}_s^\frac{1}{2}J_2\|_{\mathcal{S}_h}\big)\sup_{\substack{v\in [W]^d\\\|v\|_W=1}}\big(\|\bar{h}^{-1}(v-\widetilde{\Pi}_hv)\|\text{ + }\|\varepsilon_s^\frac{1}{4}\bar{h}_s^{-\frac{1}{2}}(v-\widetilde{\Pi}_hv)\|_{\mathcal{S}_h}\big).
\end{align*}
Then the upper bound of $\|I^\prime r\|_{([W]^d)^\prime}$ is a consequence of the above inequality and \eqref{vT}, \eqref{vS}. The uniform lower bound of $\|I^\prime r\|_{([W]^d)^\prime}$ w.r.t.~$\varepsilon, \kappa$ follows from the bubble function technique explained in~\cite{Verfurth2013} and extremal definitions of $\bar{h},$ $\bar{h}_s$, $\varepsilon_s$.
\end{proof}

For each $T\in\mathcal{T}_h$, we define the error indicator
\begin{equation*}
    \eta_h(T)=\kappa^{-1}h_T^2\|R_1\|_T^2\text{ + }\bar{h}|^2_T\| R_2\|^2_T\text{ + }\sum_{S\in\mathcal{S}_h, S\subset\partial T}\left\{\kappa^{-1}h_S\|J_1\|^2_S\text{ + }\bar{h}_s|_S\|\varepsilon^{-\frac{1}{4}}J_2\|^2_S\right\}.
\end{equation*}
Combining \eqref{err}, \eqref{gradstarr} and Lemma \ref{Istarr} leads to the robust a posteriori error estimate
\begin{equation}\label{robusteta}
    \sum_{T\in\mathcal{T}_h}\eta_h(T)-\text{osc}_{h,1}-\text{osc}_{h,2}\preccurlyeq\|u-u_h\|^2_V\preccurlyeq\sum_{T\in\mathcal{T}_h}\eta_h(T).
\end{equation}
\begin{remark}
Our analysis for \eqref{robusteta} is based on regular decomposition and minimal regularity  while the theoretical analysis of \emph{recovery} estimators in \cite{CaiCao2015} hinges on Helmholtz decomposition and full elliptic regularity of the underlying domain. Our estimator $\eta_h(T)$ is robust w.r.t.~both large jump of $\varepsilon$ and extreme magnitude of $\varepsilon,$ $\kappa$.
\end{remark}

\section{Numerical Demonstration of Robustness}
In the end, we focus on \eqref{Maxwell} with \emph{constant} and \emph{positive} $\varepsilon$ and
$\kappa$, which is a special case of the interface problem considered before. In this case, the error indicator $\eta_h(T)$ reduces to the one derived in \cite{LiZikatanov2020arXiv}. For constant $\varepsilon$ and $\kappa,$
the classical a posteriori error estimator for \eqref{disMaxwell} (cf.~\cite{BHHW2000,Schoberl2008}) reads  
\begin{equation*}
    \tilde{\eta}_h(T)=\kappa^{-1}h_T^2\|R_1\|_T^2\text{ + }\varepsilon^{-1}h_T^2\| R_2\|^2_T\text{ + }\sum_{S\in\mathcal{S}_h, S\subset\partial T}\left\{\kappa^{-1}h_S\|J_1\|^2_S\text{ + }\varepsilon^{-1}h_S\|J_2\|^2_S\right\}.
\end{equation*}
Although weighted with $\varepsilon$, $\kappa$, this estimator is not fully robust w.r.t.~$\varepsilon$ and  $\kappa$. In fact, the ratio $\|u-u_h\|_V/(\sum_{T\in\mathcal{T}_h}\tilde{\eta}_h(T))^\frac{1}{2}$ may tend to zero as $\varepsilon\ll\kappa$, i.e., the constant $\underline{C}$ in the lower bound $\underline{C}(\sum_{T\in\mathcal{T}_h}\tilde{\eta}_h(T))^\frac{1}{2}\leq\|u-u_h\|_V\text{ + }\text{h.o.t.}$ is not uniform. 

To validate the result,
we test $\eta_h(T)$ and $\tilde{\eta}_h(T)$ by the lowest order edge element discretization of \eqref{Maxwell} defined on $\Omega=[0,1]^2$ with the exact solution $u(x_1,x_2)=\big(\cos(\pi x_1)\sin(\pi x_2),\sin(\pi x_1)\cos(\pi x_2)\big).$
The initial partition of $\Omega$ is a $4\times4$ uniform triangular mesh. A sequence of nested grids is computed by uniform quad-refinement. Let $e=\|u-u_h\|_V,$ $\eta=(\sum_{T\in\mathcal{T}_h}\eta_h(T))^\frac{1}{2}$ and $\tilde{\eta}=(\sum_{T\in\mathcal{T}_h}\tilde{\eta}_h(T))^\frac{1}{2}$. Numerical results are shown in Table \ref{ConvergenceTab}. In its last row, we compute effectivity index ``eff'' of $\eta$ (resp.~$\tilde{\eta}$), which is the algorithmic mean of $e/\eta$ (resp.~$e/\tilde{\eta}$) over all grid levels. It is observed that the performance of $\eta$ is uniformly effective for all $\varepsilon, \kappa$, while the efficiency of $\tilde{\eta}$ deteriorates for small $\varepsilon$ and large $\kappa.$ 

\begin{table}[tbhp]
\caption{Convergence history of the lowest order edge element and error estimators}
\centering
\begin{tabular}{|c|c|c|c|c|c|c|c|c|c|}
\hline
number & $e $&$\eta$
 &$\tilde{\eta}$
&  $e$ &  $\eta$ &  $\tilde{\eta}$&$e$& $\eta$ & $\tilde{\eta}$ \\
 of &$\varepsilon=0.1$&$ \varepsilon=0.1$&$ \varepsilon=0.1$&$ \varepsilon=10^{-3}$&$ \varepsilon=10^{-3}$&$\varepsilon=10^{-3}$&$\varepsilon=10^{-5}$&$\varepsilon=10^{-5}$&$\varepsilon=10^{-5}$\\
 elements&$\kappa=10$&$ \kappa=10$&$ \kappa=10$&$ \kappa=10^3$&$ \kappa=10^3$&$\kappa=10^3$&$\kappa=10^5$&$\kappa=10^5$&$\kappa=10^5$\\
\hline

             32    &8.42e-1&3.72&3.94&8.24&3.72e+1&1.46e+3&8.24e+1&3.72e+2&1.46e+6\\
             128      &4.35e-1&2.04&2.04&4.30&2.04e+1&3.80e+2&4.30e+1&2.04e+2&3.80e+5\\
             512       &2.19e-1&1.04&1.04&2.18&1.06e+1&9.70e+1&2.18e+1&1.06e+2&9.64e+4\\
             2048      &1.10e-1&5.26e-1&5.26e-1&1.10&5.36&2.48e+1&1.10e+1&5.36e+1&2.42e+4\\
             8192      &5.49e-2&2.64e-1&2.64e-1&5.49e-1&2.69&6.61&5.49&2.69e+1&6.06e+3\\
\hline
eff & N/A &2.13e-1&2.11e-1 &N/A &2.09e-1 &3.33e-2&N/A &2.09e-1&3.51e-4\\
\hline
\end{tabular}
\label{ConvergenceTab}
\end{table}


\begin{thebibliography}{10}
\providecommand{\url}[1]{{#1}}
\providecommand{\urlprefix}{URL }
\expandafter\ifx\csname urlstyle\endcsname\relax
  \providecommand{\doi}[1]{DOI~\discretionary{}{}{}#1}\else
  \providecommand{\doi}{DOI~\discretionary{}{}{}\begingroup
  \urlstyle{rm}\Url}\fi

\bibitem{BHHW2000}
Beck, R., Hiptmair, R., Hoppe, R.H.W., Wohlmuth, B.: Residual based a
  posteriori error estimators for eddy current computation.
\newblock M2AN Math. Model. Numer. Anal. \textbf{34}(1), 159--182 (2000).


\bibitem{BernardiVerfurth2000}
Bernardi, C., Verf\"{u}rth, R.: Adaptive finite element methods for elliptic
  equations with non-smooth coefficients.
\newblock Numer. Math. \textbf{85}(4), 579--608 (2000).


\bibitem{CaiCao2015}
Cai, Z., Cao, S.: A recovery-based a posteriori error estimator for {${\bf
  H(curl)}$} interface problems.
\newblock Comput. Methods Appl. Mech. Engrg. \textbf{296}, 169--195 (2015).




\bibitem{HiptmairXu2007}
Hiptmair, R., Xu, J.: Nodal auxiliary space preconditioning in {${\bf H}({\bf
  curl})$} and {${\bf H}({\rm div})$} spaces.
\newblock SIAM J. Numer. Anal. \textbf{45}(6), 2483--2509 (2007).

\bibitem{HuShuZou2013}
Hu, Q., Shu, S., Zou, J.: A discrete weighted Helmholtz decomposition and its application.
\newblock Numer. Math. \textbf{125}, 153--189 (2013).


\bibitem{LiZikatanov2020CAMWA}
Li, Y., Zikatanov, L.: A posteriori error estimates of finite element methods
  by preconditioning.
\newblock Computers \& Mathematics with Applications.
\newblock \textbf{91}, 192--201 (2021).


\bibitem{LiZikatanov2020arXiv}
Li, Y., Zikatanov, L.: Nodal auxiliary a posteriori error estimates.
\newblock arXiv:2010.06774 (2020).


\bibitem{Nepomnyaschikh1992}
Nepomnyaschikh, S.V.: Decomposition and fictitious domains methods for elliptic
  boundary value problems.
\newblock In: Fifth {I}nternational {S}ymposium on {D}omain {D}ecomposition
  {M}ethods for {P}artial {D}ifferential {E}quations ({N}orfolk, {VA}, 1991),
  pp. 62--72. SIAM, Philadelphia, PA (1992)

\bibitem{Schoberl2008}
Sch\"{o}berl, J.: A posteriori error estimates for {M}axwell equations.
\newblock Math. Comp. \textbf{77}(262), 633--649 (2008).



\bibitem{Verfurth2013}
Verf\"{u}rth, R.: A posteriori error estimation techniques for finite element
  methods.
\newblock Numerical Mathematics and Scientific Computation. Oxford University
  Press, Oxford (2013).

\bibitem{Xu1992}
Xu, J.: Iterative methods by space decomposition and subspace correction.
\newblock SIAM Rev. \textbf{34}(4), 581--613 (1992).

\bibitem{XuZhu2011}
Xu, J., Zhu, Y.: Robust preconditioner for {${\bf H}(\bf{curl})$} interface
  problems.
\newblock In: Domain decomposition methods in science and engineering {XIX},
  \emph{Lect. Notes Comput. Sci. Eng.}, vol.~78, pp. 173--180. Springer,
  Heidelberg (2011).
\end{thebibliography}
\end{document}